\def \X{{\mathfrak X}}
\newcommand{\be}{\begin{equation}}
\newcommand{\ee}{\end{equation}}
\newcommand{\bea}{\begin{eqnarray}}
\newcommand{\eea}{\end{eqnarray}}
\newcommand{\ba}{\begin{array}}
\newcommand{\ea}{\end{array}}
\newcommand{\bi}{\begin{itemize}}
\newcommand{\ei}{\end{itemize}}
\newcommand{\bc}{\begin{center}}
\newcommand{\ec}{\end{center}}
\newcommand{\bfr}{\begin{flushright}}
\newcommand{\efr}{\end{flushright}}
\newcommand{\f}{\frac}
\newcommand{\la}{\langle}
\newcommand{\ra}{\rangle}
\newcommand{\p}{\partial}
\newcommand{\ds}{\displaystyle}
\newcommand{\q}{\quad}
\newcommand{\lam}{\lambda}
\newcommand{\til}{\widetilde}
\newcommand{\bb}{\mathbb}
\newcommand{\cal}{\mathcal}
\newtheorem{theorem}{Theorem}[]
\newtheorem{lemma} [theorem]{Lemma}
\newtheorem{definition} [theorem]{Definition}
\title[A compactness theorem in Finsler geometry]{A compactness theorem in Finsler geometry}
\author{Mihai Anastasiei}
\address{Faculty of Mathematics, Alexandru Ioan Cuza University of Ia\c si  and
Mathematical Institute ``O. Mayer'', Romanian Academy, Ia\c si, Romania}
\email{anastas@uaic.ro}
\author{Ioan Radu Peter}
\address{Department  of Mathematics, Technical University of Cluj-Napoca, Memorandumului 28, Cod RO-400114, Cluj-Napoca, Romania}
\email{Ioan.Radu.Peter@math.utcluj.ro}
\subjclass[2000]{53C60}
\keywords{Finsler manifolds, Morse theory, minimal submanifolds}
\begin{document}
\dedicatory{Dedicated to Professor Dr. Lajos Tam\'{a}ssy on the occasion
     of his $90$th birthday}
\begin{abstract}
Let $(M.F)$ be a complete Finsler manifold and $P$ be a minimal and compact submanifold of $M$. $\mathbf{Ric}_k(x), x\in M$ is a differential invariant that interpolates between the flag curvature and the Ricci curvature. We prove that if on any geodesic $\gamma(t)$ emanating orthogonally from $P$ we have $\int_{0}^{\infty}\mathbf{Ric}_{k}(t)>0$, then $M$ is compact.
\end{abstract}

\maketitle

\section*{Introduction}

The classical Gauss-Bonnet Theorem opened a series of results that  are extracting topological properties of a differentiable manifold from the various properties of certain differential geometric invariants of that manifold. The basic topics in this framework consist of the Hopf-Rinow Theorem, the theory of Jacobi fields and the relationship between geodesics and curvature, the Theorems of Hadamard, Myers, Synge, the Rauch Comparison Theorem, the Morse Index Theorem and others. In the Finslerian setting the most recent account of results of this type is due to D. Bao, S.S. Chern and Z. Shen in \cite{BCS}, Ch. 6-9. For a weakened version of the Myers theorem we refer to \cite{AN1}.

The main differential geometric invariants involved in these results are the flag curvature and the Ricci curvature. Among the many others there exits one denoted by $\mathbf{Ric}_k$ and called $k-$ Ricci curvature that interpolates between the flag curvature and the Ricci curvature.
In this paper we consider an $n-$ dimensional, complete Finsler manifold $(M,F)$ , a minimal, compact submanifold $P$ of it and we prove that if on any geodesic $\gamma(t)$ on $M$ emanating orthogonally from $P$ we have $\int_{0}^{\infty}\mathbf{Ric}_{k}(t)>0$, then $M$ is compact. For the Riemannian case there are many similar results (see \cite{BT} and the references therein). By our knowledge, in the Finslerian case, our result is the first of this type but the techniques we use here can be adapted to find many others.  The differential invariant $\mathbf{Ric}_k$ was deeply studied by Z. Shen. In \cite{shen2}, he proves various results  concerning  the vanishing of homotopy groups
under the assumption that the $k$-Ricci curvature satisfies $\mathbf{Ric}_k\geq k$.

We outline the proof of our result. Considering the submanifold $P$ the notion of conjugate points is replaced with that of focal points. The Morse index form written on a geodesic emanating from $P$ takes a special form that involves the second fundamental form of $P$ (see \cite{Pe1}). The conditions $M$ complete but non-compact, and $P$ compact imply that any geodesic emanating orthogonally from $P$ is free of focal points. But choosing a convenient orthogonal frame along the geodesic emanating from $P$, we reduce the Jacobi equation to a scalar differential equation of order two that  by our hypothesis on $\mathbf{Ric}_k$  admits on $[0,\infty )$ a  solution with at least one  zero. In combination with a form of the Index Lemma from \cite{Pe1} one yields that the said geodesic has focal points. The contradiction shows that $M$ has to be compact.

In the Sections 1-3 we prepare all is needed for the detailed proof given in the Section 4.

\section{Preliminaries}
Let $M$ be a real manifold of dimension $n$ and $(TM, \pi, M)$ its tangent bundle. The vertical bundle of the manifold $M$ is the vector subbundle of the double tangent bundle $TTM$ denoted by $({\mathcal V},\tilde \pi, TM)$ and defined  by ${\mathcal V}=\mbox{Ker}\, d\pi\subset T(TM)$, where $d\pi$ is the linear tangent map to $\pi$. Let $(x^i)$ denote the local coordinates on an open subset $U$ of $M$, and $(x^i,y^i)$ the induced
coordinates on $\pi^{-1}(U)\subset TM$. The radial vector field $\iota$ is the vertical vector field locally given by $\iota(x,y)=y^i\ds\f{\p}{\p y^i}$.

A Finsler metric on $M$ is a function $F:TM\to\bb{R}_+$ satisfying the following properties:
\begin{enumerate}
\item $F^2$ is smooth on $\til{M}$ where $\til{M}=TM\setminus {0}$
\item $F(u)>0$ for all $u\in\til{M}$
\item $F(\lam u)=|\lam|F(u)$ for all $u\in TM$, $\lam\in\bb{R}$
\item For any $p\in M$ the indicatrix $I_p=\{u\in T_pM|\ F(u)<1\}$ is strongly convex.
\end{enumerate}

A manifold $M$ endowed with a Finsler metric $F$ is called a Finsler manifold $(M,F)$.

>From the condition (4) it follows that the quantities $g_{ij}(x,y)=\ds\f{1}{2}\ds\f{\p^2 F^2(x,y)}{\p y^i \p y^j}$  form the entries of
a positive definite matrix so a Riemannian metric $\la\cdot,\cdot\ra$ can be introduced in the vertical bundle $({\mathcal V},\til{\pi},TM)$.

On a Finsler manifold there is not, in general, a linear metrical connection. However, there are several metrical connections and among them the analogue of the Levi-Civita connection in the vertical bundle of $(M,F)$.

We will use the Cartan connection which is a good vertical connection on ${\mathcal V}$, i.e. an $\bb{R}$-linear map
$$\nabla^v:{\X}(\til{M})\times{\X}({\mathcal V})\to {\X}({\mathcal V})$$ having the usual properties of a covariant
derivative , is metrical with respect to $\langle \cdot, \cdot \rangle$, and 'good' in the sense that the bundle map
$\Lambda:T\til{M}\to{\mathcal V}$ defined by $\Lambda(Z)=\nabla_Z^v \iota$ restricted to ${\mathcal V}$ is a bundle isomorphism. The latter property induces the horizontal subspaces $H_u=\mbox{Ker}\,\Lambda$ for all $u\in \til{M}$ which are direct summands of the vertical subspaces
$V_u=\mbox{Ker}\,(d\pi)_u$. They define a vector bundle called the  horizontal bundle ${\mathcal H}$ such that $$T\til{M}={\mathcal H}\oplus{\mathcal V}.$$

For a tangent vector field $X$ on $M$ we have its vertical lift $X^V$ and its horizontal lift $X^H$ to $\til{M}$.

Let be $\delta_i=\left(\dfrac{\partial}{\partial x^i}\right)^H$. These local vector fields provide a local basis for the distribution ${\mathcal H}$.  Define $\Theta:{\mathcal V}\to{\mathcal H}$ as the vector bundle morphism locally given  by $\Theta \left(\dfrac{\partial}{\partial y^i}\right)= \delta_i$. It is in fact the inverse of the mapping $\Lambda$ and is clearly an isomorphism of vector bundles. It is called the horizontal map associated to the horizontal bundle ${\mathcal H}$.

Using $\Theta$, first we get the radial horizontal vector field $\chi=\Theta\circ\iota$. For a curve $\sigma(t)$ on $M$ let be $\dot{\sigma}$ its tangent vector field. Its horizontal lift ${\dot\sigma}^H$ is just $\chi$ in the point $\dot{\sigma}$ of $TM$. Locally, ${\dot{\sigma}}^H=\dfrac{d\sigma ^i}{dt}\delta_i$.

Secondly we can extend the covariant derivation $\nabla^v$ of the vertical bundle to the whole tangent bundle of $\til{M}$. Denoting it with $\nabla$, for horizontal vector fields $H$ we set $$\nabla_Z H=\Theta(\nabla_Z^v(\Theta^{-1}(H))),\ \forall \ Z\in {\X}(\til{M}).$$

An arbitrary vector field $Y\in {\X}(\til{M})$ is decomposed into vertical and horizontal parts: $$\nabla_Z Y=\nabla_Z Y^V+\nabla_Z Y^H.$$

Thus $\nabla:{\X}(T\til{M})\times {\X}(T\til{M})\to {\X}(T\til{M})$ is a linear connection on $\til{M}$ induced by a good vertical connection. Its torsion $\theta $ and curvature $\Omega$ are defined as usual:
\begin{eqnarray*}
\theta (X,Y)= \nabla _{X}Y-\nabla_{Y}X -[X,Y],
&\Omega(X,Y)Z=\nabla_X\nabla_YZ-\nabla_Y\nabla_XZ-\nabla_{[X,Y]}Z
\end{eqnarray*}
and the torsion has the property that for horizontal vectors, $\theta(X,Y)$ is a vertical vector \cite{Ab:Pa1}.

The Riemannian metric $\la\cdot ,  \cdot\ra$ on $\mathcal{V }$ can be moved to a Riemannian metric on the vector bundle  $\mathcal{H}$ and  these two Riemannian metrics provide a Riemannian metric on $\tilde {M}$ (a Sasaki type metric) just by stating that $\mathcal{H}$ is orthogonal to $\mathcal{V}$. All these metrics will be denoted with the same symbol whose meaning will be clear from context.

The metrical property of the  connection $\nabla$ holds good:

$$ X\langle Y, Z\rangle=\langle \nabla_{X}Y, Z\rangle+\langle Y, \nabla_{X}Z\rangle.$$

The sectional curvature of $\nabla$ along a curve $\sigma$ is given as follows:
$$K_{\dot\sigma}(U^H,U^H) = \langle \Omega(\dot\sigma^H, U^H)\dot\sigma^H,U^H\rangle$$ for any $U\in \X(M)$. This is called the
horizontal flag curvature in \cite{Ab:Pa1}.

\section{The Morse Index form}
We recall some facts about the variation of energy and Morse Index form.\cite{Pe1}.

\begin{definition}\cite{Ab:Pa1}. A \textit{regular curve} $\sigma:[a,b]\to M$ is a $C^1$-curve such that
$$\forall \ t\in [a,b]\q T(t) \equiv \dot{\sigma}(t)=d\sigma_t\left(\f{d}{dt}\right)\ne 0.$$

The length, with respect to the Finsler metric $F:TM\to\bb{R}^+$, of the regular curve is given by
$$L(\sigma)=\int_a^b F(\dot{\sigma}(t))dt$$ and the energy is given by $$E(\sigma)=\int_a^b F^2(\dot{\sigma}(t))dt.$$
\end{definition}

The critical points of the (length) energy functional are the geodesics $\sigma$ in the Finsler manifold $M$, whenever they are parameterized by arc-length i.e. $F({\dot{\sigma}}) =1$. A geodesic parameterized by the arc-length will be called normal. One proves that the geodesics are characterized also by

\begin{theorem}\cite{Ab:Pa1}
A regular curve $\sigma_0$ is \textit{geodesic} for $F$ if and only if $$\nabla_{T^H}T^H\equiv 0$$
where $T^H(u)= {\dot{\sigma}}^H=\chi_u(\dot{\sigma }(t))\in {\cal H}_u$ for all $u\in\til{M}_{\sigma (t)}$.
\end{theorem}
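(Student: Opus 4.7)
The plan is to derive the equation $\nabla_{T^H}T^H=0$ from the first variation of energy, since by definition a geodesic is a critical point of $E(\sigma)=\int_a^b F^2(\dot\sigma(t))\,dt$ among regular curves with fixed endpoints. I would start by fixing a smooth one-parameter variation $\sigma_s:[a,b]\to M$ of $\sigma_0$ with $\sigma_s(a)=\sigma_0(a)$ and $\sigma_s(b)=\sigma_0(b)$, and set $V(t)=\frac{\partial\sigma_s}{\partial s}\big|_{s=0}$. Lifting everything to $\widetilde M$, the tangent field $\dot\sigma_s$ defines a surface in $\widetilde M$ whose two coordinate directions lift horizontally to $T^H$ and $V^H$; along $\sigma_0$ one has $F^2(\dot\sigma_0)=\langle T^H,T^H\rangle$ by the construction of the Sasaki-type metric.

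Next I would compute $\frac{d}{ds}E(\sigma_s)|_{s=0}$. Using the metricity of the Cartan-induced connection stated in the text,
\[
\tfrac{d}{ds}\langle T^H,T^H\rangle\big|_{s=0}=2\langle \nabla_{V^H}T^H,T^H\rangle.
\]
To convert this into an expression involving $\nabla_{T^H}T^H$, I would rewrite $\nabla_{V^H}T^H=\nabla_{T^H}V^H+\theta(V^H,T^H)+[V^H,T^H]$. The crucial structural input is the property recalled just after the definition of $\nabla$: for horizontal vectors, the torsion $\theta(V^H,T^H)$ is vertical, hence $\langle\theta(V^H,T^H),T^H\rangle=0$ since $T^H$ is horizontal and $\mathcal H\perp\mathcal V$. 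The Lie-bracket term $[V^H,T^H]$, regarded as a vector field on the variation surface in $\widetilde M$, can be arranged to vanish (or at least to pair trivially with $T^H$) because $V$ and $T$ come from the two coordinate derivatives of a two-parameter map into $M$ and their horizontal lifts along $\dot\sigma_0$ reproduce this flatness.

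After this reduction the integrand becomes $2\langle \nabla_{T^H}V^H,T^H\rangle$, and another application of metricity allows an integration by parts,
\[
\int_a^b\langle \nabla_{T^H}V^H,T^H\rangle\,dt=\bigl[\langle V^H,T^H\rangle\bigr]_a^b-\int_a^b\langle V^H,\nabla_{T^H}T^H\rangle\,dt.
\]
The boundary term vanishes because $V(a)=V(b)=0$, so
\[
\tfrac{d}{ds}E(\sigma_s)\big|_{s=0}=-2\int_a^b\langle V^H,\nabla_{T^H}T^H\rangle\,dt.
\]
Standard density of admissible variations (one can locally take $V^H$ to be any prescribed horizontal field along $\dot\sigma_0$ vanishing at the endpoints) then gives the equivalence: the first variation vanishes for every such $V$ if and only if $\nabla_{T^H}T^H\equiv 0$.

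The main obstacle I anticipate is purely book-keeping on $\widetilde M$ rather than a deep idea: one must be careful that $T^H$ and $V^H$ along the variation are the horizontal lifts taken at the moving base point $\dot\sigma_s(t)\in\widetilde M$, so that the Cartan connection (which genuinely depends on the direction $y$) is applied at the correct fibre. Once the vertical/horizontal decomposition is fixed and the vanishing of the horizontal--horizontal part of the torsion is invoked to kill the only Finsler-specific obstruction, the argument parallels the Riemannian derivation of the geodesic equation.
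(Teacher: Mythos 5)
The paper itself gives no proof of this statement---it is quoted from Abate--Patrizio---but the standard proof there is exactly the one you propose: compute the first variation of the energy functional and read off the Euler--Lagrange equation. Your outline is therefore the right one, and most of the individual steps are correctly identified: metricity of $\nabla$, the fact that the torsion of two horizontal fields is vertical and hence $\langle\cdot,\cdot\rangle$-orthogonal to $T^H$, the vanishing (or verticality) of the bracket term because $V$ and $T$ are coordinate fields of the variation, the integration by parts, and the final density argument, which does close because $\nabla_{T^H}T^H$ is automatically horizontal by the definition $\nabla_Z H=\Theta(\nabla^v_Z(\Theta^{-1}(H)))$.

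There is, however, one genuine gap, located precisely at the point you dismiss as book-keeping. When you write $\tfrac{d}{ds}\langle T^H,T^H\rangle|_{s=0}=2\langle\nabla_{V^H}T^H,T^H\rangle$, you differentiate along the horizontal lift $V^H$; but the curve $s\mapsto\dot\sigma_s(t)$ in $\widetilde M$ along which the integrand actually varies has as tangent the \emph{complete} lift of $V$, which is $V^H$ plus a generally nonzero vertical component recording how $\dot\sigma_s(t)$ moves inside the fibre. Metricity only yields $2\langle\nabla_{\widehat V}T^H,T^H\rangle$ with $\widehat V$ that full tangent, so you owe an argument that the vertical part contributes nothing. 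It does vanish, but for a reason you never invoke: the covariant derivative of the metric in a vertical direction is governed by the Cartan tensor $A_y$, and $A_y(y,\cdot,\cdot)=0$, equivalently $\langle\nabla_{W}\chi,\chi\rangle=0$ for $W$ vertical because $\chi$ is the radial horizontal field. This, rather than the verticality of the horizontal--horizontal torsion, is the decisive Finsler-specific input; without it the first variation does not reduce to the Riemannian-looking formula, and your closing claim that the torsion term is ``the only Finsler-specific obstruction'' is not accurate. Supplying that one identity completes the proof.
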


The second variation formula is providing the Jacobi fields and suggests the consideration of the index form. It is derived by using a two parameters geodesic variation. For details we refer to  \cite{Ab:Pa1}, \cite{Ma1}, \cite{Pe1}.

Let $\sigma:[a,b]\to M$ be a normal geodesic in a Finsler manifold $M$. We will denote by
${\X}[a,b]$ the space of piecewise smooth vector fields $X$ along $\sigma$ such that
$$\la X^H,T^H\ra_T\equiv 0.$$

\begin{definition}\cite{Ab:Pa1}
The \textit{Morse index form} $I=I_a^b:{\X}[a,b]\times{\X}[a,b]\to \bb{R}$ of the normal geodesic $\sigma :[a,b]\to M$
is the symmetric bilinear form $$I(X,Y)=\int_a^b [\la \nabla_{T^H}X^H,\nabla_{T^H}Y^H\ra_T-\la\Omega(T^H,X^H)T^H,Y^H\ra_T]dt$$ for all $X,Y\in
{\X}[a,b]$.
\end{definition}
After some computations one gets another formula for the Morse index form \cite{Ab:Pa1}:
$$I(X,Y)=\la\nabla_{T^H}X^H,Y^H\ra_T\Big|_a^b-\int_a^b \la\nabla_{T^H}\nabla_{T^H}X^H+\Omega(T^H,X^H)T^H,Y^H\ra_T dt.$$

\begin{definition}\cite{Ab:Pa1}
A \textit{Jacobi field} along a geodesic $\sigma:[a,b]\to M$ is a vector field $J$ which satisfies the Jacobi
equation $$\nabla _{T^H}\nabla _{T^H}J^H+\Omega(T^H,J^H)T^H\equiv 0$$ where $J^H(t)=\chi_{\dot{\sigma }(t)}(J(t))$.
\end{definition}
$\dot{\sigma}$ and $t\dot{\sigma}$ are Jacobi fields; the first one never vanishes, the second one vanishes only at $t=0$.

Two points $\sigma(t_0)$ and $\sigma (t_1)$, $t_0,t_1\in [a,b]$ are said to be conjugate along $\sigma $ if there exists a nonzero Jacobi field $J$ along $\sigma$ with $J(t_0)=0$ and $J(t_1)=0$.

\section{Minimal submanifolds. Focal points}

Let $P$ be a submanifold of $M$ of dimension $r<n$. We consider the set $$A=\{(x,v)|x\in P, v\in T_xM \}=\{\til x\in
\til{M}|\pi(\til x)\in P\}.$$

Let $H_{\til x}T_xM $ and $H_{\til x}T_xP$ be the horizontal lifts of $T_xM$ and $T_xP$ to $\til x$ and
\begin{displaymath}
H_PTM=\bigcup_{\til x\in A}H_{\til x}T_xM
\end{displaymath}
and
\begin{displaymath}
H_PTP=\bigcup_{\til x\in A}H_{\til x}T_xP.
\end{displaymath}
 For horizontal vector fields
$X,Y\in H_PTP$ let $X^*,Y^*$ be some prolongations of them to $H_PTM$. The restriction of $\nabla_{X^*}Y^*$ to $\til{P}= TP\setminus 0$ does not depend of the choice of
the prolongations.

Let $P_{\til{x}}^\perp$ be the $\la\cdot,\cdot \ra_{\til{x}}$ orthogonal complement of $H_{\til{x}}TP$ in $H_{\til{x}}TM$. By the orthogonal decomposition $$H_{\til x}T_xM=H_{\til x}T_xP\oplus P_{\til{x}}^\perp, {\til x} = (x,v)\in A $$ we obtain that
$$\nabla_{X^*}{Y^*}=\nabla^*_{X}{Y}+\mathbb{I}_v(X,Y).$$ We will call $\mathbb{I}_v(X,Y)$
the second fundamental form at $X$ and $Y$ in the direction of $v$. Note that for $\tilde x=(x,v)$ with $v\in T_xM\setminus T_xP$ we have
\begin{eqnarray}\label{second}
\la\nabla_{X^*}{Y^*},v^H\ra_v=\mathbb{I}_v(X,Y).
\end{eqnarray}

\begin{definition}
Let $P\subset M$ be an $r$-dimensional submanifold of a Finsler manifold $(M,F)$. The submanifold $P$ is called minimal if  for every tangent vector $v$ to $M$ and for any horizontal orthogonal vectors $V_i^H, i=\overline{1,r}$ (i.e. $\la V_i^H, V_j^H\ra_v=0$ for $i\neq j$) we have
$\sum_{i=1}^{r}\mathbb{I}_{v}(V_i^H,V_i^H)=0$.
\end{definition}

The condition of minimality is equivalent with the vanishing of the trace of the linear operator $A_{v^H}$, where $A_{v^H}$
is the linear operator defined by
$$\la A_{v^H}X^H,Y^H\ra_v=\la\mathbb{I}_T(X^H,Y^H),v^H\ra_v.$$
For details we refer to \cite{Dr1}, \cite{shen1}.

Now let $\sigma:[a,b]\to M$ be a normal geodesic in $M$ with $\sigma
(a)\in P$ and $\dot{\sigma}^H(a)$ in the normal bundle of $P$ (i.e. $\dot{\sigma}^H(a)\perp(H_{\dot{\sigma}(a)}T_{\sigma (a)}P)$).

Let $\til{\X}^P={\X}^P[a,b]$ be the vector space of all piecewise smooth vector fields $X$ along $\sigma $ such that $X^H(a)\in
T_{\dot{\sigma}(a)}\til{P}$ and let ${\X}^P$ be the subspace of $\til{\X}^P$ consisting of these $X$ such that $X^H$ is orthogonal
to $\dot{\sigma}^H$ along the curve $\sigma $.

We have that
\begin{eqnarray}\label{second1}
\la\nabla_{T^H}X^H,Y^H\ra_T&=&\la\nabla_{X^H}T^H+[T^H,X^H]+\theta(T^H,X^h),Y^H\ra_T\\
\nonumber &=&\la\nabla_{X^H}T^H,Y^H\ra_T,
\end{eqnarray}
because $[T^H,X^H]$ and $\theta(T^H,X^h)$ are vertical vectors (\cite{Ab:Pa1}).

And for $Y^H$ orthogonal to $T^H$ we have that
\begin{eqnarray}\label{second2}
0=X^H\la T^H,Y^H\ra_T=\la\nabla_{X^H}T^H,Y^H\ra_T+\la T^H,\nabla_{X^H}Y^H\ra_T.
\end{eqnarray}
By considering the vector fields $X^H,Y^H$ such that $X^H(a),Y^H(a)\in T_{\dot{\sigma }(a)}\til{P}$ and taking account of formulas
(\ref{second}), (\ref{second1}), (\ref{second2}) the Morse index form $I^P: \X^P\times \X^P\rightarrow \mathbb{R}$, becomes
$$I^P(X,Y)=\la\nabla _{T^H}X^H,Y^H\ra_T\Big|^b+\la \mathbb{I}_{T}(X^H,Y^H),T^H\ra_T\Big|_a$$
$$-\int_a^b \la\nabla _{T^H}\nabla _{T^H}X^H+\Omega(T^H,X^H)T^H,Y^H\ra_T dt.$$

From \cite{Pe1} we know that $I^P$ is symmetric.

\begin{definition}\cite{Pe1}
Let $P\subset M$ be an $r$-dimensional submanifold of a Finsler manifold $(M,F)$. A $P$-Jacobi field $J$ is a Jacobi field which satisfies in addition
$$J(a)\in T_{\sigma (a)}P$$ and
$$\la\nabla _{T^H}J^H+A_{T^H}J^H,Y^H\ra_T\Big|_a=0$$ for all $Y\in (T_{\sigma (a)}P)^H$.
\end{definition}
The last condition means in fact that
$$\nabla_{T^H}J^H+A_{T^H}J^H\in((T_{\sigma(a)}P)^H)^\perp.$$

The dimension of the vector space of all $P$-Jacobi fields along $\sigma $ is equal to $r$ and the dimension of the vector space of the Jacobi
fields satisfying $$\la J^H,T^H\ra=0$$ is equal to $r-1$.

If $P$ is a point, then a $P$-Jacobi field is a Jacobi field $J$ along $\sigma $ such that $J(a)=0$.

 A point $\sigma (t_0)$, $t_0\in [a,b]$ is said to be a $P$-focal point along $\sigma $ if there exists a non-null $P$-Jacobi field $J$ along $\sigma$ with $J(t_0)=0$.

We shall use the following  Lemma from \cite{Pe1}.

\begin{lemma}\label{mon}
Let $(M,F)$ be a Finsler manifold and $\sigma:[a,b]\to M$ be a geodesic, and $P\subset M$ be a submanifold of $M$. Suppose that
there are no $P$-focal points along $\sigma$. Let $X,J\in\til{\X}^P$ be vector fields orthogonal to $\sigma$ with $J$ a
$P$-Jacobi field such that $X(b)=J(b)$. Then $$I^P(X,X)\ge I^P(J,J)$$ with equality if and only if  $X=J$.
\end{lemma}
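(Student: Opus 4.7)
The plan is to follow the classical Index Lemma strategy, decomposing $X=J+W$ with $W:=X-J$. Since $X(b)=J(b)$ we have $W(b)=0$, and since both $X,J\in\til{\X}^P$ we have $W(a)\in T_{\sigma(a)}P$ with $W$ still orthogonal to $T^H$. Using bilinearity together with the symmetry of $I^P$ established in \cite{Pe1} we expand
$$I^P(X,X)-I^P(J,J)=I^P(W,W)+2\,I^P(J,W),$$
so the lemma splits into (a) showing the cross term $I^P(J,W)$ vanishes, and (b) showing the diagonal term $I^P(W,W)$ is nonnegative, with equality only when $W\equiv0$.

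For (a) I would insert the pair $(J,W)$ into the integration-by-parts form of $I^P$ displayed in Section 3. Three simplifications occur: the integrand $\nabla_{T^H}\nabla_{T^H}J^H+\Omega(T^H,J^H)T^H$ vanishes because $J$ is a Jacobi field; the $b$-boundary term $\la\nabla_{T^H}J^H,W^H\ra_T(b)$ vanishes because $W(b)=0$; and the remaining $a$-term $\la\mathbb{I}_T(J^H,W^H),T^H\ra_T(a)=\la A_{T^H}J^H,W^H\ra_T(a)$ is killed by the $P$-Jacobi boundary condition $\la\nabla_{T^H}J^H+A_{T^H}J^H,Y^H\ra_T\big|_a=0$ applied with $Y^H=W^H(a)\in(T_{\sigma(a)}P)^H$.

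For (b) I would use the no-$P$-focal-point hypothesis to produce a suitable basis $\{J_1,\ldots,J_{n-1}\}$ of $P$-Jacobi fields orthogonal to $T^H$ that is pointwise linearly independent on $(a,b]$; any $W\in\X^P$ with $W(b)=0$ then admits a smooth expansion $W^H=\sum_i\phi_iJ_i^H$ with $\phi_i(b)=0$. A Riccati-type manipulation, combining the Jacobi equation for each $J_i$ with metric-compatibility of $\nabla$, rewrites the integrand of $I^P(W,W)$ as the sum of a perfect square $\bigl|\sum_i\phi_i'J_i^H\bigr|_T^2$ and a total derivative whose endpoint contributions at $a$ and $b$ cancel thanks to $W(b)=0$ and to the $P$-Jacobi condition satisfied by the $J_i$ at $a$. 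This yields
$$I^P(W,W)=\int_a^b\Bigl|\sum_i\phi_i'J_i^H\Bigr|_T^2\,dt\ge 0,$$
and equality forces each $\phi_i'\equiv0$, whence $\phi_i\equiv\phi_i(b)=0$ and $W\equiv0$, so that $X=J$.

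The main obstacle I anticipate is establishing the Riccati identity in this Finslerian framework, because it rests on the symmetry $\la\nabla_{T^H}J_i^H,J_j^H\ra_T=\la J_i^H,\nabla_{T^H}J_j^H\ra_T$ of the chosen Jacobi fields relative to the Cartan connection. In Riemannian geometry this follows immediately from the symmetries of $R$, but here it must be extracted from metric compatibility of $\nabla$ together with the fact (cited from \cite{Ab:Pa1}) that $[T^H,\cdot]$ and $\theta(T^H,\cdot)$ are vertical and hence drop out of horizontal pairings --- the same mechanism that underlies the symmetry of $I^P$ proved in \cite{Pe1}. I expect the argument to transfer, but the careful bookkeeping at $t=a$, where the second fundamental form and the $P$-Jacobi boundary condition interact, is the delicate point to verify.
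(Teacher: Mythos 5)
A preliminary remark: the paper does not prove this lemma at all --- it is quoted from \cite{Pe1} as a known result --- so there is no internal proof to measure yours against. Your outline is the classical index-lemma strategy (write $X=J+W$ with $W(b)=0$, kill the cross term $I^P(J,W)$ using the Jacobi equation, the vanishing of $W$ at $b$, and the $P$-Jacobi boundary condition at $a$, then show $I^P(W,W)\ge 0$ by expanding $W$ in $P$-Jacobi fields), which is exactly what the cited source does in the Finsler setting. Step (a) is complete as stated, and you correctly isolate the Finslerian subtlety in step (b): the Lagrange-type identity $\frac{d}{dt}\left[\la\nabla_{T^H}J_i^H,J_j^H\ra_T-\la J_i^H,\nabla_{T^H}J_j^H\ra_T\right]=0$ rests on the same curvature symmetry and verticality of $[T^H,\cdot]$ and $\theta(T^H,\cdot)$ that give the symmetry of $I^P$.

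There is, however, a genuine gap in step (b) as written. The assertion that every $W\in\X^P$ with $W(b)=0$ ``admits a smooth expansion $W^H=\sum_i\phi_iJ_i^H$'' does not follow from pointwise linear independence of the $J_i$ on $(a,b]$ alone: at $t=a$ the values $J_i(a)$ span only the $r$-dimensional space $T_{\sigma(a)}P$, so some of the $J_i$ vanish there and the coefficients $\phi_i$ are a priori defined and smooth only on the open interval. One needs the standard auxiliary lemma (the analogue of the technical lemma in the Riemannian Morse index theorem) showing that, because $W(a)\in T_{\sigma(a)}P$ and the matrix formed from the $J_i(a)$ together with the derivatives $\nabla_{T^H}J_i^H(a)$ of those $J_i$ vanishing at $a$ is nonsingular, the $\phi_i$ extend (piecewise) smoothly to $t=a$; only then do the endpoint contributions of the total-derivative term reproduce the second-fundamental-form term in $I^P$ and cancel. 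Without this, the identity $I^P(W,W)=\int_a^b\left|\sum_i\phi_i'J_i^H\right|_T^2dt$ is not established, and this extension is where most of the actual work of the proof lies. A smaller point to reconcile: your construction needs the space of $P$-Jacobi fields orthogonal to $T^H$ to have dimension $n-1$ (which the initial conditions $J(a)\in T_{\sigma(a)}P$ and $\nabla_{T^H}J^H+A_{T^H}J^H\in((T_{\sigma(a)}P)^H)^\perp$ do give), whereas the paper states this dimension to be $r-1$; taken literally that would leave your family too small to span whenever $r<n-1$, so you should note explicitly that you are using the count $n-1$.
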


\section{Our result}

First, we introduce  the $k$-Ricci curvature, following \cite{shen2}. For a $(k+1)$-dimensional subspace
$\mathcal{V}\in T_xM$ the Ricci curvature $\mathbf{Ric}_y\mathcal{V}$ on $\mathcal V$ is the trace of the
Riemann curvature restricted to $\mathcal{V}$, with flagpole $y$, and is given by:
$$\mathbf{Ric}_{y}(\mathcal{V})=\sum_{i=1}^{k} \la R_y(b_i), b_i\ra_y=\sum_{i=1}^{k}\langle \Omega (y, b_i)y,
b_i\rangle_y,$$  where $R_y(b_i) \equiv \Omega (y,b_i)y$ and $y, (b_i)_{i=\overline{1,\dots,k}}$ is an arbitrary orthonormal basis for $(\mathcal{V},\la,\ra_y)$, with $b_{k+1}=y$.
 $\mathbf{Ric}_{y}(\mathcal{V})$ is well-defined and is positively
homogeneous of degree two on $\mathcal{V}$, $$\mathbf{Ric}_{\lambda y}(\mathcal{V})=\lambda^2\mathbf{Ric}_{y}(\mathcal{V}), \ \
\mbox{for}\ \  \lambda >0, y\in \mathcal{V}.$$

It is clear from the definition that $\mathbf{Ric}_y(T_xM)$ is nothing but the Ricci curvature $\mathbf{Ric}(y)$ for $  y\in T_xM$.

If $\mathcal{V}=P\subset T_xM$ is a tangent plane, the flag curvature is given by
$$K(P,y)=\frac{\la R_y(u),u\ra_y}{\la y,y\ra_y \la u,u\ra_y-\la u,y\ra_y},$$ where $u\in P\setminus \{0\}$, $\mbox{span} (y,u)=P$. This is independent of the choice of $u\in P\setminus\{0\}$, and for $u$ being $g_y$ orthogonal to $y$ and of $g_y-$ norm $1$ it becomes
$$\mathbf{K}(P,y)=\frac{\mathbf{Ric}_y{P}}{F^2(y)},\ \ \ y\in P.$$

Consider the following function on $M$: $$\mathbf{Ric}_{k}(x):= \inf_{\dim (\mathcal{V})=k+1}\inf_{y\in \mathcal{V}}\dfrac{\mathbf{Ric}_y(\mathcal {V})}{F^2(y)},$$ the infimum being considered over all $(k+1)$-dimensional subspaces $\mathcal{V}\subset T_xM$ and $y\in\mathcal{V}\setminus\{0\}$. From the above definitions it can be seen that $$\mathbf{Ric}_1\leq \dots \leq\frac{\mathbf{Ric}_k}{k}\leq \dots \leq \frac{\mathbf{Ric}_{n-1}}{n-1},$$
and $$\mathbf{Ric}_1=\inf_{(P,y)}\mathbf{K}(P,y)\ \  \mbox{and} \ \ \mathbf{Ric}_{(n-1)}=\inf_{F(y)=1}\mathbf{Ric}(y).$$

We will say that the Finsler manifold $(M,F)$ has positive $k$-Ricci curvature if and only if $\mathbf{Ric}_k>0$.

Secondly, we recall a result from the theory of differential equations which will be essential in the proof of our  result.

\begin{theorem} (\cite{tip})
Consider the differential equation $$f''(t)+H(t)f(t)=0~,~t\in [0,\infty)$$ with $H(t)$ continuous.  If
$$\int_{0}^{\infty}H(t)dt>0$$ there exists a solution $f$ satisfying the conditions $f(0)=1$, $f'(0)=0$ and there exists $t_0>0$ for which $f(t_0)=0$.
\end{theorem}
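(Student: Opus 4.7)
The plan is to combine a Riccati reduction with a finite-time blowup argument. Let $f$ be the unique solution of $f''+Hf=0$ with $f(0)=1$, $f'(0)=0$; suppose, toward a contradiction, that $f(t)>0$ for every $t\geq 0$. Then the logarithmic derivative $u=f'/f$ is well defined and continuous on $[0,\infty)$ with $u(0)=0$, and it satisfies the Riccati equation
$$u'(t)=-u(t)^{2}-H(t).$$

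Integrating on $[0,T]$ yields
$$u(T)=-\int_{0}^{T}u(s)^{2}\,ds-\int_{0}^{T}H(s)\,ds.$$
Since $\int_{0}^{\infty}H\,ds>0$, there exist $T^{\ast}>0$ and a constant $A>0$ such that $\int_{0}^{T}H\,ds\geq A$ for all $T\geq T^{\ast}$. Hence $u(T)\leq -A$ on $[T^{\ast},\infty)$, so $u(s)^{2}\geq A^{2}$ there; substituting back into the integral identity shows that $|u(T)|$ grows at least linearly and, in particular, $u(T)\to -\infty$ as $T\to\infty$.

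Next I would extract a contradiction from the Riccati equation itself. Pick $T_{1}\geq T^{\ast}$ large enough that $|u(T_{1})|^{2}>2\sup_{s\in[T_{1},T_{1}+1]}|H(s)|$. On this interval the estimate $u'=-u^{2}-H\leq -u^{2}/2$ holds as long as $u$ stays below $-\sqrt{2\sup|H|}$, which it does because $u$ is decreasing there. Comparison with the ODE $v'=-v^{2}/2$, $v(T_{1})=u(T_{1})$, shows that $u$ reaches $-\infty$ at some $t_{0}\leq T_{1}+2/|u(T_{1})|<T_{1}+1$. But $u=f'/f$ must be continuous on every interval on which $f>0$, so this finite-time blowup forces $f(t_{0})=0$, contradicting the standing assumption.

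The main obstacle is locating $T_{1}$ in the final step when $H$ is unbounded: one must find a time at which $|u(T_{1})|^{2}$ dominates the local supremum of $|H|$ on $[T_{1},T_{1}+1]$. For bounded $H$ this is immediate once $|u|$ is large enough; in the general continuous case, the linear lower bound $|u(T)|\gtrsim T-T^{\ast}$ derived above, combined with the local boundedness of $H$, supplies a suitable $T_{1}$ and makes the comparison with $v'=-v^{2}/2$ rigorous.
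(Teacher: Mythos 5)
The paper itself offers no proof of this theorem --- it is quoted from Tipler's article \cite{tip} --- so your argument has to stand on its own. The Riccati reduction is the right idea and your first steps are correct: with $u=f'/f$ one gets $u'=-u^2-H$, hence $u(T)=-\int_0^T u^2\,ds-\int_0^T H\,ds$, and since $\int_0^T H\,ds\ge A>0$ for all $T\ge T^*$ you correctly conclude $u(T)\le -A-A^2(T-T^*)$ on $[T^*,\infty)$. The gap is in the finite-time blowup step. You need a $T_1$ with $|u(T_1)|^2>2\sup_{[T_1,T_1+1]}|H|$, and you claim the linear growth of $|u|$ together with ``local boundedness of $H$'' supplies one. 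It does not. Local boundedness of a continuous function is automatic and gives no uniform control: convergence of the improper integral $\lim_{l\to\infty}\int_0^l H$ places no pointwise bound on $H$, which may have spikes of height $e^{e^n}$ near $t=n$ whose widths shrink fast enough that their areas are summable. Then $\sup_{[T,T+1]}|H|$ outruns any polynomial, while your lower bound on $|u|$ (even after bootstrapping the identity, which upgrades linear to polynomial growth of any fixed degree) stays polynomial. So the required $T_1$ need not exist, and the comparison with $v'=-v^2/2$ cannot be launched. The proof is complete only for bounded $H$ (or, say, $H$ with subquadratic growth), not for general continuous $H$.

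The repair is to extract the blowup from the integrated inequality rather than from a pointwise one, so that $H$ enters only through $\int_0^T H$, which you already control. From $u(T)\le -A-\int_{T^*}^{T}u(s)^2\,ds$ for $T\ge T^*$, set $v(T)=\int_{T^*}^{T}u(s)^2\,ds$; then $v(T^*)=0$ and
$$v'(T)=u(T)^2\ge\bigl(A+v(T)\bigr)^2 .$$
Writing $w=A+v$, the inequality $w'\ge w^2$ with $w(T^*)=A>0$ integrates to $1/w(T)\le 1/A-(T-T^*)$, which is impossible for $T\ge T^*+1/A$ since $w>0$. Hence $f$ cannot remain positive past $T^*+1/A$, and no comparison between $u^2$ and $\sup|H|$ is ever needed. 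With this replacement your argument closes; as written, the last step fails for exactly the class of $H$ the theorem is stated for.
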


Here $ \int_{0}^{\infty}$ means $\lim_{l\rightarrow {\infty}}\int_0^l$. The conditions satisfied by the solution $f$ are similar to those meet in the definition of  focal points. A differential equation $f''(t)+H(t)f(t)=0$ admitting such a solution $f$ will be called  {\it focal}. There are several other sufficient conditions for a differential equation $f''(t)+H(t)f(t)=0$ be focal, \cite{Ga1}, \cite{Ga2}.

Now we state and prove our  result.

\begin{theorem}
Let $(M,F)$ be a $n$-dimensional  complete Finsler manifold and $P$ be a $r$-dimensional compact and minimal submanifold of $M$. If the $k$-Ricci curvature satisfies the condition
$$\int_{0}^{\infty}\mathbf{Ric}_{k}(t)>0$$
along any geodesic $\gamma:[0,\infty)\rightarrow M, t\rightarrow \gamma(t)$ emanating orthogonally from $P$, then $M$ is compact.
\end{theorem}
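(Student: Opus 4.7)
My plan is to argue by contradiction; assume $M$ is noncompact. Completeness of $M$ together with compactness of $P$ makes $d(\cdot,P)$ unbounded on $M$, and a standard argument using the compactness of the unit normal bundle of $P$ produces a ray $\gamma:[0,\infty)\to M$ that starts orthogonally from $P$ and satisfies $d(\gamma(t),P)=t$ for all $t\ge 0$. Because such a $\gamma$ is globally $P$-minimizing, the focal-point theory underlying Lemma \ref{mon} forbids any $P$-focal point along $\gamma$; my target is to contradict this on some bounded initial segment $[0,t_0]$.

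The test fields are built as follows. Taking $k=r:=\dim P$ (the natural matching of the intermediate-Ricci index to the dimension of $P$; the general case $k\le r$ can be reduced to this one by averaging over orthonormal $k$-frames in $T_{\gamma(0)}P$ and invoking minimality to cancel the mixed $\mathbb I$-terms), parallel-transport with respect to the Cartan connection along $\gamma$ an orthonormal basis of $T_{\gamma(0)}P$ to obtain vector fields $\{E_i\}_{i=2}^{r+1}$ along $\gamma$ whose horizontal lifts are parallel, orthonormal, and everywhere orthogonal to $T^H=\dot\gamma^H$. Apply the focal-ODE theorem of \cite{tip} to
\[
f''(t)+\frac{\mathbf{Ric}_r(\gamma(t))}{r}\,f(t)=0,
\]
whose integrated coefficient is positive by hypothesis, producing $f$ with $f(0)=1$, $f'(0)=0$, $f(t_0)=0$ for some $t_0>0$. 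Set $X_i:=fE_i\in\mathcal X^P[0,t_0]$.

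Now I would sum $\sum_{i=2}^{r+1}I^P(X_i,X_i)$ using the integrated expression for $I^P$ from Section 2. The boundary term at $b=t_0$ vanishes because $f(t_0)=0$. The boundary term at $a=0$ equals $\sum_{i=2}^{r+1}\langle\mathbb I_T(E_i^H,E_i^H),T^H\rangle_T$ and dies by the minimality of $P$ applied to the orthonormal basis $\{E_i(0)\}$ of $T_{\gamma(0)}P$. Using $\nabla_{T^H}E_i^H=0$, the surviving integral collapses, after invoking the ODE, to $\int_0^{t_0}f^2\bigl[\mathbf{Ric}_r(\gamma(t))-\sum_{i=2}^{r+1}\langle\Omega(T^H,E_i^H)T^H,E_i^H\rangle\bigr]\,dt$, which is $\le 0$ by the defining infimum for $\mathbf{Ric}_r$. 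Hence $\sum_i I^P(X_i,X_i)\le 0$. On the other hand, the absence of $P$-focal points along $\gamma|_{[0,t_0]}$ lets Lemma \ref{mon}, compared against the zero $P$-Jacobi field matching $X_i(t_0)=0$, force $I^P(X_i,X_i)>0$ for each $i$, yielding the contradiction and therefore the compactness of $M$.

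The step I expect to require most care is the coordinated vanishing of the two boundary terms in $I^P$: it rests on synchronising $f(t_0)=0$ and $f'(0)=0$ from the focal-ODE theorem, the minimality hypothesis on $P$, and the choice $E_i(0)\in T_{\gamma(0)}P$ all at once. This is precisely the spot where the Finslerian second-fundamental-form contribution to $I^P$ (absent in the closed-geodesic Morse theory) enters the argument, and any misalignment leaves leftover boundary terms whose sign is not automatic.
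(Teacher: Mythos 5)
Your proposal is correct and follows essentially the same route as the paper: contradiction via a $P$-focal-point-free minimizing ray obtained from compactness of $P$, test fields $fE_i$ built from a parallel orthonormal frame and a solution $f$ of the focal ODE of \cite{tip}, minimality of $P$ killing the boundary term at $0$ and $f(t_0)=0$ killing the one at $t_0$, and Lemma \ref{mon} supplying the contradiction. The only (harmless) variation is that you use the infimum $\mathbf{Ric}_r(\gamma(t))/r$ as the ODE coefficient and then compare it with $\sum_i\langle\Omega(T^H,E_i^H)T^H,E_i^H\rangle$ to get $\sum_i I^P(X_i,X_i)\le 0$, whereas the paper takes the latter quantity itself as the coefficient and gets exact equality to $0$.
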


\begin{proof}
Suppose, by contrary that $M$ is not compact. Then there exists a normal geodesic $\gamma(t)$ emanating from $P$ and orthogonal to $P$ free of focal points, i.e. there exists a sequence of ${p_i}$ such that the distance $d(p_i,P)$ tends to infinity, since it is supposed that $M$ is non-compact.
By the completeness of $M$ and the compactness of $P$ there exists for each $p_i$ a normal geodesic $\gamma_i$ which realizes the minimum distance $d(p_i,P)$. Denote by $x_i$ the point in $P$ which is joined with $p_i$ by $\gamma_i$, $\gamma_i(0)=x_i\in P$, $\gamma(1)=p_i$. It is known that the geodesic $\gamma_i$ intersects $P$ ortogonally with respect to the inner product $\la~,~\ra_{\gamma'_i(0)}$, that is $T_i=\gamma'_i(0)$ is orthogonal to $P$ with respect to $\la~,~\ra_{\gamma'_i(0)}$. By the compactness of $P$ there exists an accumulation point $x\in P$ of the sequence ${x_i}$ and also $T_i\rightarrow T$ with $T \perp P$ with respect to $\la~,~\ra_T$ and $F(T)=1$. It follows that the length of the geodesic $\gamma(t)$ with initial data $(x,T)$ is equal to $d(x,\gamma(t))$, so $\gamma(t)$ is $P$-focal point free.

On the other hand from the conditions in the theorem we will show that $\gamma(t)$ has $P$-focal points. This contradiction shows that $M$ has to be compact.

The index form along the geodesic $\gamma$ with variations vector field $V$ is
\begin{eqnarray}
I^P(V, V)&=&\int_0^l [\la \nabla_{T^H}V^H,\nabla_{T^H}V^H
\ra_T-\la\Omega(T^H,V^H)V^H,T^H\ra_T]dt  \\
&=&\la \nabla_{T^H}V^H, V^H\ra_T\Big|^l    + \la \mathbb{I}_{T}(V^H,V^H),T^H\ra_T\Big|_0 \nonumber \\
&&-\int_0^l \la\nabla _{T^H}\nabla _{T^H}V^H+
\Omega(T^H,V^H)T^H,V^H\ra_T dt. \nonumber
\end{eqnarray}

We are going to use the parallel transport with reference vector $T$. We construct a moving frame ${V_i(t)}$, $i=\overline{1,r}$ along $\gamma$ such that
\begin{itemize}
\item $V_i(0)$ is an orthogonal basis in $T_{\gamma(0)}P$ and $\la V_i^H(0), T^H(0)\ra_{T(0)}=0$
\item $V_i(t)$ are parallel  along $\gamma$, i.e. $\nabla_{T^H}V_i^H=0$.
\end{itemize}
It follows that the $V_i^H(t)$  are  orthogonal to each other and to $T^H(t)$ along $\gamma$
with respect to the inner product $\la~,~\ra_T(t)$.

We have, for $i=\overline{1,r}$
\begin{eqnarray}
I^P(V_i, V_i)&=&\la \nabla_{T^H}V_i^H, V_i^H\ra_T\Big|^l+\la \mathbb{I}_{T}(V_i^H,V_i^H),T^H\ra_T\Big|_0 \\
&&-\int_0^l \la\nabla _{T^H}\nabla _{T^H}V_i^H+\Omega(T^H,V_i^H)T^H,V_i^H\ra_T dt. \nonumber
\end{eqnarray}

We summing up from $i=1$ to $r$. Since $P$ is minimal we have $$\sum_{i=1}^{k}\la\mathbb{I}_{T}(V_i^H,V_i^H),T^H\ra_T\Big|_0=0 $$ and one yields

\begin{eqnarray*}
\sum_{i=1}^{r}I(V_i,V_i)&=&\sum_{i=1}^{r}\la \nabla_{T^H}V_i^H, V_i^H\ra_T\Big|^l\\
&&-\sum_{i=1}^{r} \int_0^l \la\nabla _{T^H}\nabla_{T^H}V_i^H+
\Omega(T^H,V_i^H)T^H,V_i^H\ra_T dt~.
\end{eqnarray*}

Let us take $X_i(t)=f(t)V_i(t)$ with $f:[0,\infty )$ satisfying $f(0)=1, f'(0)=0$.  Then
$$X_i(0)=V_i(0),X'_i(t)=f'(t)V_i(t), X"(t) = f"(t)V_i.$$

It follows that
\begin{eqnarray}
\sum_{i=1}^{r}I(X_i,X_i)= r f(t)f'(t)\Big|^{l}  -r \int_0^{l}(f''(t)+f(t)\frac{1}{r}\mathbf{Ric}_T(\mathcal {V})f(t)dt ,
\end{eqnarray}
where $\mathcal{V}$ is the linear space spanned by $T, V_i, i=1,2...r$.

In our hypothesis on $\mathbf {Ric}_k$, setting $ rH = \mathbf{Ric}_T(\mathcal {V})$ it comes out that the equation $f"(t) +f(t)H(t) =0$ is focal. By the Theorem 10, there exists $t_0>0$ such that  $f(t_0)=0$. We take $l=t_0$. In the r.h.s. of (6) the first term vanishes because of $f(t_0)=0$ and the second is null since $f$ is a solution of the focal equation $f"(t) +f(t)H(t) =0$. Thus (6) reduces to $\sum_{i=1}^{r}I(X_i,X_i)=0$. It follows that  there exists $X_i$ with $I(X_i,X_i)\leq0$.

Then, the Lemma \ref{mon}  implies that there exists $P$-focal points on the geodesic $\gamma$, which contradicts the assumption that $M$ is not compact. It follows that $M$ has to be compact.
\end{proof}

{\bf Remark.}
For Berwald manifolds our result follows directly from \cite{Ga1}. Moreover, for Berwald manifolds some results from \cite{Ga2} holds.  Indeed, in the case of Berwald manifolds the connection of the Berwald metric  lives on the tangent level (the referrence vector is irrelevant).  Szab\'o's structure theorems (see \cite{Sz}) implies that there exists a non-unique Riemannian metric $g$ on $M$ such that the Berwald connection is the connection of the Riemannian metric. Taking into account that the flag curvature of the Berwald metric is equal to the sectional curvature of the Riemannian metric $g$ and the second fundamental form of a submanifold with respect to the Berwald metric will be the analogue counterpart of the Riemannian metric, the results from \cite{Ga1},\cite{Ga2} apply.

{\bf{Acknowledgements.}} The first author was partially supported by a grant of the Romanian National Authority for Scientific Research, CNSS-UEFISCDI, project number PN-II-ID-PCE-2011-3-0256.

The work of the second author has been co-funded by the Sectoral Operational Programme Human Resources Development 2007-2013 of the Romanian Ministry of Labour, Family and Social Protection through the Financial Agreement POSDRU/89/1.5/S/62557and the RO-Hu bilateral cooperation program.


\begin{thebibliography}{99}

\bibitem{Ab:Pa1}
M.~Abate and G.~Patrizio.
\newblock {\em Finsler Metrics - A Global Approach}, volume 1591 of {\em
  Lecture Notes in Mathematics}.
\newblock Springer Verlag, Berlin, Heidelberg, 1994.

\bibitem{AN1}Mihai Anastasiei
\newblock A Generalization of Myers Theorem.
\newblock {\em An. \c Stiin\c t. Univ. "A. I. Cuza" Iasi, Mat. (NS0 53(2007), suppl. 1, 33-40}

\bibitem{BCS} D. Bao, S.-S. Chern, Z. Shen. {\it An Introduction to Riemann- Finsler Geometry}. Gradute Text in Mathematics 200, Springer, 2000, xx+ 431p.

\bibitem{BT} Tran Quoc Binh, L. Tamássy, Galloway's compactness theorem on Sasakian manifolds.Aequationes Math. 58 (1999), no. 1-2, 118–124.

\bibitem{Dr1}
S.~Dragomir.
\newblock Submanifolds of {F}insler {S}paces.
\newblock {\em Conf. Sem. Mat. Univ. Bari}, 271:1--15, 1986.

\bibitem{Ga1} G. Galloway, Some results on the occurrence of compact minimal submanifolds, Manuscripta
Math. 35 (1981), 209-219.

\bibitem{Ga2} G.J. Galloway, Compactness criteria for Riemannian manifolds. Proc. of AMS, vol. 84,1,(1982), 6-10.


\bibitem{Ma1}
M.~Matsumoto.
\newblock {\em Foundations of Finsler geometry and special Finsler spaces}.
\newblock Kasheisha Press, Japan, 1986.

\bibitem{Pe1} Ioan Radu Peter,\newblock { On the Morse Index Theorem where the ends are submanifolds in Finsler geometry.}\newblock {\em Houston Journal of Mathematics}, 32(4), 995-1009, 2006.

\bibitem{tip}
F.J. Tipler.\newblock {General relativity and ordinary diferential equations.}\newblock {\em J. Diff. Eq.}, 30, 165-174, 1978.

\bibitem{shen1}
Z. Shen.\newblock {Finsler Geometry of submanifolds}\newblock {\em Math. Ann.},311, 549-576, 1998.

\bibitem{shen2} Z. Shen. {\it Lecture Notes on Finsler Geometry}. Springer Verlag, 2001.
\bibitem{Sz} Z. I. Szab\'o {Positive definite Berwald spaces. }\newblock {\em Tensor (N.S.)}, 35, 25-39, 1981.
\end{thebibliography}
\end{document}